\theoremstyle{definition}
\newtheorem{defn}{Definition}[section]
\newtheorem{example}[defn]{Example}
\newtheorem{rem}[defn]{Remark}
\theoremstyle{plain}
\newtheorem{thm}[defn]{Theorem}
\newtheorem{lem}[defn]{Lemma}
\newtheorem{cor}[defn]{Corollary}
\newtheorem{question}[defn]{Question}
\title{Flat plumbing basket and contact structure}
\author{Tetsuya Ito}
\address{Department of Mathematics, Kyoto University, Kyoto 606-8502, JAPAN}
\email{tetitoh@math.kyoto-u.ac.jp}
\author{Keiji Tagami}
\address{
Department of Fisheries Distribution and Management,
National Fisheries University,
Shimonoseki, Yamaguchi 759-6595
JAPAN
}
\email{tagami@fish-u.ac.jp}
\subjclass[2020]{57K10, 57K33}
\keywords{flat plumbing basket; Legendrian link; trasnsverse link; open book decomposition}
\date{\today}
\begin{document}
\maketitle
\begin{abstract}
A flat plumbing basket is a Seifert surface consisting of a disk and bands contained in distinct pages of the disk open book decomposition of the 3-sphere. 
In this paper, we examine close connections between flat plumbing baskets and the contact structure supported by the open book.
As an application we give lower bounds for the flat plumbing basket numbers and determine the flat plumbing basket numbers for various knots and links, including the torus links. 
\end{abstract}
\section{Introduction}
In \cite{FHK} Furihata-Hirasawa-Kobayashi showed that every oriented link in $\mathbf{S}^{3}$ bounds a certain special Seifert surface which they called a \emph{flat plumbing basket}. A surface $F$ in $\mathbf{S}^{3}$ is a flat plumbing basket if it is obtained from a disk by plumbing unknotted and untwisted annuli so that the gluing regions are contained in the disk. We call a flat plumbing basket $F$ whose boundary is $L$ \emph{a flat plumbing basket presentation} of $L$.

\par
Equivalently, a flat plumbing basket can be seen as a surface which consists of a page of the disk open book decomposition of $\mathbf{S}^{3}$ and finitely many bands contained in distinct pages of the open book. In this point of view, it is quite natural to expect a connection between flat plumbing baskets and contact geometry since an open book decomposition is a decomposition of \emph{contact} 3-manifolds.
\par 
In this paper, we discuss a connection between flat plumbing basket presentations and contact geometry. We show that a flat plumbing basket presentation can be seen as Legendrian and transverse link representatives in several natural ways. 
\par 
As an application of a contact geometry point of view, we give various lower bounds for the {\it flat plumbing basket number} $fpbk(L)$, the minimal number of bands which is needed to obtain a flat plumbing basket presentation of the link $L$ \cite{Kim-flat}. 
Let $\overline{L}$ be the mirror image of $L$. Then we obtain the following lower bounds for $fpbk(L)$.
\begin{thm}\label{thm:self-linking}
Let $L$ be an oriented link in $\mathbf{S}^3$. 
Then 
\[
fpbk(L)\geq \max\{-\overline{sl}(L), -\overline{sl}(\overline{L})\}-1.
\]
\end{thm}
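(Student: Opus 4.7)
The plan is to exploit the convex-surface structure of a flat plumbing basket (established earlier in the paper) to produce a distinguished transverse representative of $L$ whose self-linking number is controlled by the number of bands, and then to run the same argument on $\overline{L}$. Since $\overline{sl}(L)$ is by definition a maximum over transverse representatives, any explicit lower bound on $sl$ of a particular representative translates directly into the required lower bound on $fpbk(L)$.

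First I would realize $\partial F$ as the boundary of a convex Seifert surface. Let $F$ be a flat plumbing basket presentation of $L$ with $n=fpbk(L)$ bands $b_1,\dots,b_n$, so $\chi(F)=1-n$. By the structural result of the paper, $F$ is convex with Legendrian boundary $L_{leg}$, and the dividing set $\Gamma_F$ is dictated by the band-plumbing decomposition: since the disk $D$ is a single page and each $b_i$ lies in its own page, each band contributes exactly one dividing arc joining its two feet on $\partial D$, and no other dividing curves appear. A direct count then gives
\[
|L_{leg}\cap\Gamma_F| = 2(n+1),\qquad \chi(F_+)-\chi(F_-)=0
\]
(up to signs depending on the chosen co-orientation). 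Kanda's formulas $tb(L_{leg})=-\tfrac{1}{2}|L_{leg}\cap\Gamma_F|$ and $rot(L_{leg})=\chi(F_+)-\chi(F_-)$ then produce $tb(L_{leg})+rot(L_{leg})=-(n+1)$.

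Next I would pass to the transverse push-off. The negative transverse push-off $T_-$ of $L_{leg}$ is a transverse representative of $L$ whose self-linking satisfies $sl(T_-)=tb(L_{leg})+rot(L_{leg})=-(n+1)$, so $\overline{sl}(L)\geq -(n+1)$, which rearranges to $fpbk(L)\geq -\overline{sl}(L)-1$. For the mirror bound, the same surface $F$ viewed in $(\mathbf{S}^3,-\xi_{std})$ --- equivalently, with the opposite open book orientation --- is a flat plumbing basket of $\overline{L}$ with the same $n$ bands; repeating the construction (now with the positive push-off, by the reversed sign conventions) yields a transverse representative of $\overline{L}$ with self-linking $-(n+1)$, and hence $fpbk(L)=fpbk(\overline{L})\geq -\overline{sl}(\overline{L})-1$. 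Taking the max of the two inequalities gives the theorem.

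The main obstacle will be the combinatorial identification of $\Gamma_F$ and the signed regions $F_\pm$ from the plumbing data: one must rule out superfluous boundary-parallel dividing curves so that $\Gamma_F$ takes the expected minimal form, and one must track orientation conventions carefully enough to see that $tb+rot$ (rather than $tb-rot$) is the relevant combination and that the mirror construction really flips the relevant sign. Once the dividing set is pinned down, everything reduces to Kanda's formulas and the standard push-off identities. The non-triviality hypothesis on $L$ ensures that $\overline{sl}(L)$ is finite, so that the inequality is non-vacuous.
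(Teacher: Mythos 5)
Your overall strategy---realize $F$ as a convex surface with Legendrian boundary, read off $tb$ and $rot$ from the dividing set via Kanda's formulas, and pass to a transverse push-off---is legitimate, and it is in fact the route the paper takes in Section~\ref{sec:general} to prove the generalization to arbitrary open books (Theorem~\ref{thm:general}); the paper's primary proof of the statement at hand is instead an elementary front-projection computation (Lemma~\ref{lem:thurston-bennequin}), with a second direct proof via closed braids. However, your execution contains a genuine error in the identification of the dividing set, and your intermediate values of $tb$ and $rot$ are wrong. Each band $B_i$ is attached to the disk along \emph{two} arcs ($B_i\cap U$ consists of two arcs), and the dividing set separating $F_+=D$ from $F_-=B_1\cup\cdots\cup B_n$ is the union of these $2n$ attaching arcs---not one arc per band. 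Hence $\#(\partial F\cap\Gamma_F)=4n$ and $tb(\mathcal{L}_F)=-2n$, while $rot(\mathcal{L}_F)=\chi(F_+)-\chi(F_-)=\pm(1-n)$, not $0$. Your claimed count $\lvert L_{leg}\cap\Gamma_F\rvert=2(n+1)$ is not even consistent with your own description of $\Gamma_F$ ($n$ properly embedded arcs would give $2n$ boundary intersections, hence $tb=-n$), and the value $tb=-(n+1)$ is falsifiable: a minimal flat plumbing basket of the left-handed trefoil has $n=4$ bands, so your formula would produce a Legendrian representative with $tb=-5$, exceeding $\overline{tb}=-6$ for that knot.

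Your final inequality comes out correct only because two errors cancel: with the correct values the relevant combination is $tb-rot=-2n-(1-n)=-(n+1)$, realized by the positive transverse push-off (for the co-orientation with $F_+=D$), whereas you pair an incorrect $tb$ and $rot$ with $tb+rot$; with the correct $tb$ and $rot$ your combination would give only $1-3n$, a much weaker bound. As written the argument does not establish the theorem. The fix is to correct the dividing-set count as above, or to bypass convexity entirely and count cusps and crossings in the front projection as the paper does. The mirror half of your argument is fine in spirit, though the paper's observation that $fpbk(L)=fpbk(\overline{L})$ makes it immediate, and the non-triviality hypothesis plays no role beyond excluding the unknot, for which the right-hand side is negative anyway.
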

\begin{thm}\label{thm:arc-00}
Let $L$ be a non-trivial oriented link in $\mathbf{S}^3$. 
Then
\[ 
2fpbk(L) \geq \max\{-\overline{tb}(L),-\overline{tb}(\overline{L})\}+b(L).
\]
\end{thm}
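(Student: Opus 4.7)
The plan is to exploit the convexity of the flat plumbing basket established earlier in the paper and to compute the Thurston--Bennequin number of a Legendrian realization of $\partial F$ directly from the dividing set.

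First, I would start with a flat plumbing basket presentation $F$ of $L$ realizing the minimum $n = fpbk(L)$. By the structural result announced in the introduction, $F$ can be isotoped so that it is a convex surface in $(\mathbf{S}^3,\xi_{std})$ whose dividing set $\Gamma_F$ is compatible with the band-plumbing decomposition: the disk $D$ carries a controlled standard configuration of dividing arcs, and each of the $n$ plumbed bands contributes a predictable local pattern of arcs running through its plumbing region.

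Next, I would invoke the Legendrian realization principle to arrange that $\partial F = L$ is Legendrian and apply the convex-surface formula
\[
tb(L) \;=\; -\tfrac{1}{2}\,|L\cap\Gamma_F|.
\]
A band-by-band count of the endpoints of $\Gamma_F$ on $\partial F$, using the local description from the first step, gives an upper bound of the form $|L\cap\Gamma_F|\leq 2n - b(L)$, where the $b(L)$ correction records the boundary arcs/components that escape being chopped up by the bands. Substituting yields $-tb(L)\leq 2\,fpbk(L)-b(L)$, and maximising over Legendrian representatives of $L$ gives $-\overline{tb}(L)\leq 2\,fpbk(L)-b(L)$.

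Finally, mirroring a flat plumbing basket of $L$ produces a flat plumbing basket of $\overline{L}$ with exactly the same number of bands and the same value of $b$, so $fpbk(\overline{L})=fpbk(L)$, and the identical argument applied to $\overline{L}$ gives $-\overline{tb}(\overline{L})\leq 2\,fpbk(L)-b(L)$. Taking the maximum of the two inequalities yields the stated bound. The main obstacle is the bookkeeping in the second step: one has to analyse the local dividing pattern on each band carefully (different orientations/twists of bands contribute different arc configurations) and verify that the total intersection count is uniformly bounded by $2n-b(L)$, rather than merely by $2n$ as a crude count would give.
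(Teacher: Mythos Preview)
Your approach has a genuine gap, and it is a conceptual one rather than just bookkeeping. The convex realization of a flat plumbing basket described in Section~\ref{sec:general} (and anticipated in the introduction) has a \emph{completely determined} dividing set: $\Gamma_F$ consists precisely of the attaching arcs of the $n$ bands, so $|\partial F\cap\Gamma_F|=4n$ on the nose and hence $tb(\mathcal{L}_F)=-2n$ exactly (this is also Lemma~\ref{lem:thurston-bennequin}). There is no flexibility here that would let ``boundary arcs that escape being chopped up by the bands'' knock the count down by $b(L)$; the dividing set does not see the braid index at all. Consequently the convex-surface route only yields $-\overline{tb}(L)\leq 2\,fpbk(L)$, which is strictly weaker than the statement you are trying to prove. (Even at the level of arithmetic, note that $|L\cap\Gamma_F|\leq 2n-b(L)$ would give $-tb\leq n-\tfrac{1}{2}b(L)$, not $2n-b(L)$.)

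The paper obtains the extra $b(L)$ by passing to a \emph{different} Legendrian representative $\mathcal{L}_G$, coming from the grid diagram $G_F$ associated to the arc presentation $\mathcal{A}_F$ (Section~\ref{sec:arc}). A band-by-band classification into five types $A,\dots,E$ gives $tb(\mathcal{L}_G)=H-2b_1(F)$, where $H$ is the number of right-to-left horizontal segments in $G_F$; flipping those segments produces a closed $H$-braid for $L$, so $H\geq b(L)$ and hence $-\overline{tb}(L)\leq 2b_1(F)-b(L)$. The braid index enters not through the dividing set of $F$ but through this auxiliary braid construction, and $\mathcal{L}_G$ is genuinely different from (and has larger $tb$ than) the Legendrian $\mathcal{L}_F$ you are working with.
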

Here $\overline{sl}(L)$ and $\overline{tb}(L)$ are the maximal self-linking number and the maximal Thurston-Bennequin number of $L$, and $b(L)$ denotes the braid index of $L$. It turns out these lower bounds are quite useful. We determine the flat plumbing basket number for various knots, including the $(p,q)$-torus link $T_{p,q}$ (Example \ref{exam:torus}). 
\par
This paper is organized as follows: 
In Section~\ref{sec:preliminary}, we recall the definition of a flat plumbing basket presentation of a link in $\mathbf{S}^{3}$.
\par 
In Section~\ref{sec:self-linking}, we show that a flat plumbing basket presentation can be viewed as a front projection of a Legendrian link $\mathcal{L}_{F}$. 
By considering the transverse push-off of $\mathcal{L}_{F}$ we prove Theorem~\ref{thm:self-linking}. 
We also point out that a flat plumbing basket presentation can be seen as a closed braid, which gives a simple and direct alternative proof of Theorem~\ref{thm:self-linking}. We then study when the lower bound in Theorem~\ref{thm:self-linking} is sharp, and determine the flat plumbing basket number for various knots.
\par 
In Section~\ref{sec:arc}, we discuss another way to relate a flat plumbing basket presentation and a Legendrian link. 
We explain how to view a flat plumbing basket presentation as an arc presentation and how to get the corresponding grid diagram. This gives a different Lengendrian link $\mathcal{L}_G$, which leads to Theorem~\ref{thm:arc-00}.
\par 
\par
Section \ref{sec:tabulation} contains tables of the flat plumbing basket numbers for prime knots with less than or equal to 9 crossings, improving a previous table \cite[Table~1]{Hirose-Nakashima}. 
We determine the flat plumbing basket numbers for these knots, with 6 exceptions.
%
\par 
Throughout this paper, all links are oriented. For basics of Legendrian knot theory, we refer to \cite{book-contact}.
This paper is an expanded and improved version of second author's preprint \cite{tagami4}.
\section{Flat plumbing basket in $\mathbf{S}^{3}$}\label{sec:preliminary}
Let $U$ be the oriented unknot in $\mathbf{S}^3$ and $\pi\colon \mathbf{S}^3\setminus U\rightarrow \mathbf{S}^1 = [0,2\pi] \slash (0\!\!\sim\!\!2\pi)$ be the disk fibration of its complement. 
We call the pair $(U,\pi)$ the {\it disk open book decomposition} of $\mathbf{S}^3$. 
The closure of a fiber $D_{\theta}:=\overline{\pi^{-1}(D_{\theta})}$ is called  a \emph{page}, and $U$ is called the \emph{binding}. 
%
%
%
\par
A Seifert surface $F$ is a {\it flat plumbing basket} if there are finitely many disjoint bands $B_{1}, \dots, B_{n}$ and $0<\theta_{1}<\dots<\theta_{n}<2\pi$ such that 
$F=D_{0}\cup B_{1} \cup \dots \cup B_{n}$, each band $B_{i}$ is contained in $D_{\theta_{i}}$ and $B_{i}\cap U$ consists of two arcs, where $n=b_{1}(F)$. 
\par
We call a flat plumbing basket $F$ whose boundary is an oriented link $L$ a {\it flat plumbing basket presentation} of $L$ (see Figure~\ref{fig:fpb1}). 
\par 
The {\it flat plumbing basket number} $fpbk(L)$ of an oriented link $L$ is the minimal number of bands among all flat plumbing basket presentations of $L$.
Namely, 
\[
fpbk(L):=\min\{b_{1}(F)\mid F\text{ is a flat plumbing basket presentation of }L\}, 
\]
where $b_{1}(F)$ is the first betti number of $F$. 
We remark that $fpbk(L)-|L|+1\in 2\mathbf{Z}_{\geq 0}$, where $|L|$ is the number of the components of $L$, and that $fpbk$ is preserved under taking the mirror image. 
\begin{figure}[h]
\begin{center}
\includegraphics[scale=0.55]{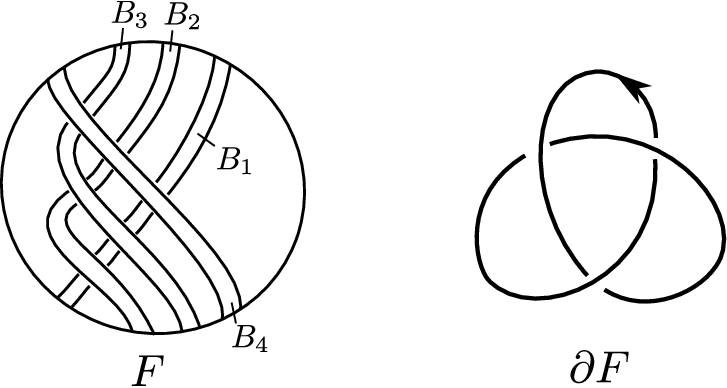}
\end{center}
\caption{An example of a flat plumbing basket $F$. It is a flat plumbing basket presentation of the negative trefoil. 
}
\label{fig:fpb1}
\end{figure}
%
\section{Flat plumbing basket as front projection}\label{sec:self-linking}
Let $F$ be a flat plumbing basket with bands $B_1, \ldots, B_n$. 
We draw $F$ as a union of bands and a slanted rectangle 
 so that the bands are attached to the rectangle at the top left edge (see the middle picture in Figure~\ref{fig:front}). 
Moreover, we draw each band so that 
\begin{itemize}
\item it is sufficiently thin and 
\item the core of the band consists of three line segments: one is parallel to the top left edge of the rectangle and the other two are perpendicular to the top left edge. 
\end{itemize}

We view as $F \subset \mathbf{R}^{3}$ and let $p:\mathbf{R}^{3} \rightarrow \mathbf{R}^{2}$ be the projection. 
Let $h_i$ be the distance of $p(\mbox{the top left edge of the rectangle})$ and $p($the line segment of the core of $B_i$
parallel to the edge$)$ in $\mathbf{R}^{2}$.
We deform each band so that $h_i$ satisfies $h_i>h_j$ if $i>j$. 
Then, 
the link diagram $p(L)=p(\partial F)$ has the property that at each crossing, the strand with smaller slope always lies in front. 
After replacing the local maxima and local minima in the horizontal coordinate with cusps, and smoothening the local maxima and local minima in the vertical coordinate, we obtain a front projection of a Legendrian link in the standard contact structure $\xi_{std}$ on $\mathbf{S}^3$ (see the right picture in Figure~\ref{fig:front} -- This picture answers \cite[Question~6.2]{tagami4}). 
\par 
We denote the Legendrian link by $\mathcal{L}_{F}$ and call it the {\it Legendrian link associated with $F$}. 
\begin{figure}[h]
\begin{center}
\includegraphics[scale=0.6]{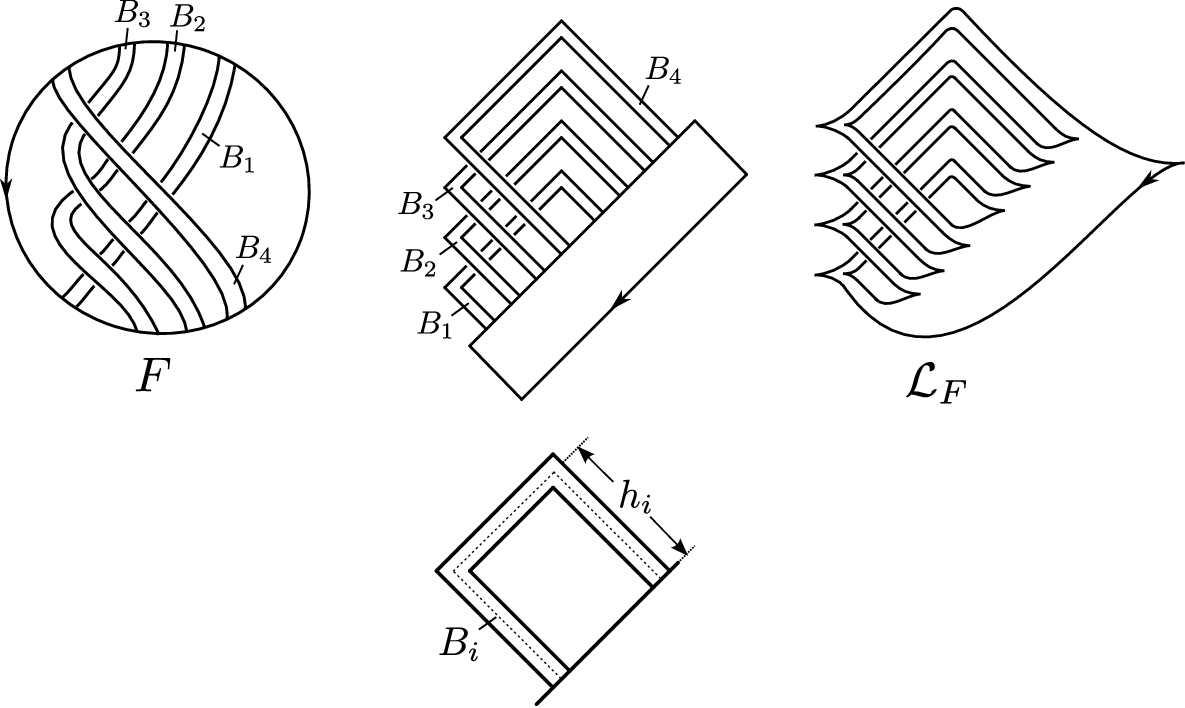}
\end{center}
\caption{
How to draw a front projection from a flat plumbing basket:
For a flat plumbing basket $F$, we draw $F$ as the middle picture, so that  $h_{i}>h_{j}$ holds if $i>j$. 
With suitable modification such a diagram can be naturally regarded as a front projection of a Legendrian link $\mathcal{L}_{F}$ (the right picture). 
}
\label{fig:front}
\end{figure}

\par 
One can immediately read the Thurston-Bennequin number and the rotation number of $\mathcal{L}_{F}$ from the front projection.
\begin{lem}\label{lem:thurston-bennequin}
Let $F$ be a flat plumbing basket with $b_1(F)>0$ and $\mathcal{L}_{F}$ be the Legendrian link associated with $F$. Then we obtain 
\begin{align}
tb(\mathcal{L}_{F})&=-2b_{1}(F), \\
rot(\mathcal{L}_{F})&=-b_{1}(F)+1. 
\end{align}
\end{lem}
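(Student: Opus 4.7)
The plan is to compute $tb(\mathcal{L}_F)$ and $rot(\mathcal{L}_F)$ directly from the front projection using the standard identities
\[
tb(\mathcal{L}) = w(\mathcal{L}) - \tfrac{1}{2}c(\mathcal{L}), \qquad rot(\mathcal{L}) = \tfrac{1}{2}\bigl(c_\downarrow(\mathcal{L}) - c_\uparrow(\mathcal{L})\bigr),
\]
where $w$ is the writhe of the front, $c$ is the total number of cusps, and $c_\downarrow, c_\uparrow$ count the downward- and upward-oriented cusps with respect to the orientation of $\partial F$ induced by the Seifert orientation of $F$.

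First, I would enumerate the cusps of $\mathcal{L}_F$. From the construction described before the lemma, the horizontal extrema fall into three classes: the extremal corners of the rotated rectangle representing $D_0$, the upper corners of each U-shaped band (where a perpendicular segment meets the parallel segment), and the attachment corners where a band's free boundary meets the top-left edge of the rectangle. Because every band is drawn with the same three-segment piecewise-linear core and the same slope convention (perpendicular segments perpendicular to the top-left edge, parallel segment parallel to it), the per-band contribution to $c$ is uniform. Walking along $\partial F$ in its induced orientation, each cusp is then classified as upward or downward by reading off the vertical tangent direction at the extremum.

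Second, I would count the writhe. In the projection, two bands $B_i$ and $B_j$ with $i<j$ can cross only where a perpendicular segment of $B_j$ (which reaches height $h_j$) crosses the parallel segment of $B_i$ (at height $h_i<h_j$), since no other pair of segments can meet. The convention that ``the strand with smaller slope is in front'' fixes the over/under relation uniformly, and the sign of each crossing is then determined by the $\partial F$-orientations of the two strands via the standard sign rule. Summing these signed crossings and combining with the cusp count from the first step yields $tb(\mathcal{L}_F) = -2b_1(F)$; the analogous count of down-minus-up cusps gives $rot(\mathcal{L}_F) = -b_1(F) + 1$.

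The main obstacle is the bookkeeping. Depending on how the attachment arcs of the bands are interleaved along the top-left edge, $\partial F$ may split into several components, each inheriting its own orientation from $F$, and the pattern of crossings between any two bands depends on whether their attachment intervals are nested, interleaved, or disjoint. One therefore needs either to verify that the per-band contributions to $c$, $c_\downarrow - c_\uparrow$, and $w$ are insensitive to the interleaving pattern, or to reduce to a canonical configuration by isotopies of the front whose effect on $tb$ and $rot$ is transparent.
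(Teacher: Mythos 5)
Your approach is the same as the paper's: read off $tb$ and $rot$ from the front projection via the standard formulas $tb = w - \#(\text{right cusps})$ and $rot = \tfrac{1}{2}(c_\downarrow - c_\uparrow)$. The difference is that the paper actually carries out the counts, while you defer them and flag the ``bookkeeping'' as the main obstacle --- but that bookkeeping \emph{is} the proof, so as written the argument is incomplete. The two observations that close it are short. First, the writhe vanishes independently of how the attachment intervals interleave: you correctly note that the only incidences are between a perpendicular segment of $B_j$ and the parallel segment of $B_i$ for $i<j$, but each such incidence is really four crossings (two boundary strands of each band), and since the two boundary strands of a band are anti-parallel with respect to the orientation induced from $F$, these four crossings cancel in sign in pairs. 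Hence $w=0$ for every interleaving pattern, and no reduction to a canonical configuration is needed. Second, each band contributes exactly two right cusps and the rectangle contributes none beyond these, so there are $2b_1(F)$ right cusps and $tb(\mathcal{L}_F) = 0 - 2b_1(F)$.

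For the rotation number the paper's count is: the up and down \emph{left} cusps occur in equal numbers and cancel, while among the right cusps exactly one is a down cusp and the remaining $2b_1(F)-1$ are up cusps, giving $rot(\mathcal{L}_F) = \tfrac{1}{2}\bigl(1-(2b_1(F)-1)\bigr) = -b_1(F)+1$. Your plan of ``walking along $\partial F$ and classifying each cusp'' would arrive at the same numbers, but you should verify the uniformity claim you assert (that the per-band contribution to $c_\downarrow - c_\uparrow$ does not depend on the interleaving); this again follows from the anti-parallel-strands remark, since reversing the role of the two strands of a band swaps an up/down pair of left cusps and leaves the right-cusp tally unchanged.
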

\begin{proof}
By construction of the front projection, the front projection of $\mathcal{L}_{F}$ has $2b_1(F)$ right cusps and the sum of signs of crossings is zero. 
Hence we have 
$tb(\mathcal{L}_{F})=0-2b_1(F)=-2b_1(F). $
\par 
The front projection of $\mathcal{L}_{F}$ has $2b_1(F)-1$ up right cusps and $1$ down right cusp, and the numbers of up and down left cusps are equal so $rot(\mathcal{L}_{F})=\frac{1}{2}(1-(2b_1(F)-1))=-b_1(F)+1$.
\end{proof}
\par 
\begin{proof}[Proof of Theorem~\ref{thm:self-linking}]
If $b_{1}(F)=0$, we see $\partial F=U$ and the equality of Theorem~\ref{thm:self-linking} holds. Hence we can suppose that $b_{1}(F)>0$.
Let $\mathcal{T}_F$ be the transverse positive push-off of $\mathcal{L}_F$. By Lemma~\ref{lem:thurston-bennequin} we have
\begin{equation}
\label{eqn:slF}
sl(\mathcal{T}_F)=tb(\mathcal{L}_F)-rot(\mathcal{L}_F)=-b_1(F)-1. 
\end{equation}
Let $\overline{sl}(L)$ be the maximal self-linking number of the topological link $L$, that is,
\[ 
\overline{sl}(L)=\max \{sl(\mathcal{T}) \: | \: \mathcal{T} \mbox{ is a transverse link topologically isotopic to } L\}. 
\]
For a flat plumbing basket presentation $F$ of $L$, we have obvious inequalities $ \overline{sl}(L)\geq sl(\mathcal{T}_F)$ and $fpbk(L) \leq b_{1}(F)$.
Also, $fpbk(L)=fpbk(\overline{L})$, where  $\overline{L}$ denotes the mirror image of $L$. Hence, by $(\ref{eqn:slF})$, we finish the proof. 
\end{proof}
%

\par 
Here is an alternative, direct proof of Theorem~\ref{thm:self-linking} which is interesting in its own right.
For a flat plumbing basket $F$, we view $L=\partial F$ as a closed $(b_{1}(F)+1)$-braid by viewing each band as a union of a disk and two twisted bands with opposite twisting (see Figure~\ref{fig:fpbktobraid}). We note that the exponent sum of such a braid is $0$.
This closed braid provides a transverse link $\mathcal{T}_{br}$. 
By Bennequin's formula \cite{Bennequin} $sl(\mathcal{T}_{br})=-b_{1}(F)-1$. 

%
%
\begin{figure}[htbp]
\begin{center}
\includegraphics*[width=65mm]{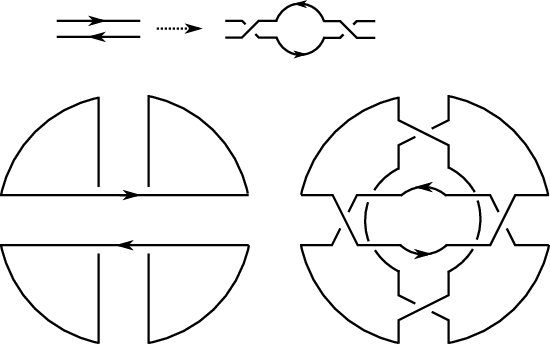}
   \caption{Flat plumbing basket, viewed as closed ($b_{1}(F)+1$)-braid}
 \label{fig:fpbktobraid}
\end{center}
\end{figure}
\begin{rem}
The transverse link $\mathcal{T}_{br}$ is transverse isotopic to $\mathcal{T}_{F}$, the transverse push-off of $\mathcal{L}_F$. This can be checked, for example, by converting the arc presentation from the front projection $\mathcal{L}_F$ into a closed braid representative, as we discuss in the next section. 
\end{rem}
\begin{cor}\label{cor:self-linking-homfly}
Let $L$ be an oriented link. Then we have 
\[
\operatorname{maxdeg}_{v}P_{L}(v,z)\leq fpbk(L), 
\]
where $P_L(v,z)$ is the HOMFLYPT polynomial and $\operatorname{maxdeg}_{v}$ is the maximal degree of the variable $v$. 
\end{cor}
\begin{proof}
The HOMFLYPT bound on the self-linking number (the Morton-Franks-Williams inequality \cite{F-W,Morton}) implies 
\[
\operatorname{maxdeg}_{v}P_{L}(v,z)\leq -\overline{sl}(L)-1. 
\]
Hence by Theorem~\ref{thm:self-linking}, we finish the proof. 
\end{proof}
\subsection{Applications and discussions}
It turns out Theorem~\ref{thm:self-linking} is quite useful, especially in the case the link $L$ is represented as a closure of a positive braid. 
\par
Let $b(L)$ be the braid index of $L$, the minimum number of strands needed to represent $L$ as a closed braid. Let $\beta$ be a $b(L)$-braid whose closure is $L$. 
By \cite{Dynnikov-Prasolov,LaFountain-Menasco},  a closed braid representative of the minimum braid index always attains the maximal self-linking number so we have
\begin{equation}\label{eqn:minimal}
\overline{sl}(L)= -b(L)+e(\beta) \text{ and } \ \overline{sl}(\overline{L})=-b(L)-e(\beta), 
\end{equation}
where $e(\beta)$ denotes the exponent sum of the braid $\beta$.
Consequently, we have
\begin{equation}\label{eqn:br}
 -\overline{sl}(L)-\overline{sl}(\overline{L})=2b(L). 
\end{equation}
\par 
On the other hand, by Bennequin's inequality \cite{Bennequin} we have 
\begin{equation}
\label{eqn:Be}
\overline{sl}(L) \leq -\chi(L), 
\end{equation}
where $\chi(L)$ is the maximum Euler characteristic of a Seifert surface of $L$.
Therefore we have the following consequence of Theorem~\ref{thm:self-linking} and equation $(\ref{eqn:br})$. 
\begin{cor}\label{cor:B}
Let $L$ be an oriented link such that the Bennequin inequality (\ref{eqn:Be}) is sharp, namely, $\overline{sl}(L) = -\chi(L)$ holds. Then $fpbk(L)\geq -\chi(L)+2b(L)-1$. 
\end{cor}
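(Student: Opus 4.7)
The plan is that this corollary should follow by direct substitution into Theorem~\ref{thm:self-linking2}, using the identity~(\ref{eqn:br}) to trade one mirror for the other. Concretely, I would drop the symmetric max on the right-hand side of Theorem~\ref{thm:self-linking2} and keep only the mirror term:
\[
fpbk(L) \;\geq\; -\overline{sl}(\overline{L}) - 1.
\]
This is legitimate because $fpbk(L) \geq \max\{-\overline{sl}(L), -\overline{sl}(\overline{L})\} - 1 \geq -\overline{sl}(\overline{L}) - 1$.

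Next I would invoke the exponent-sum identity~(\ref{eqn:br}), namely $-\overline{sl}(L) - \overline{sl}(\overline{L}) = 2b(L)$, which the excerpt has already derived from the fact that a minimum braid index representative attains $\overline{sl}$. Rearranging gives $-\overline{sl}(\overline{L}) = 2b(L) + \overline{sl}(L)$, and then the hypothesis $\overline{sl}(L) = -\chi(L)$ yields
\[
-\overline{sl}(\overline{L}) \;=\; 2b(L) - \chi(L).
\]
Plugging this into the displayed inequality above gives exactly $fpbk(L) \geq -\chi(L) + 2b(L) - 1$, which is the desired bound.

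There is essentially no serious obstacle here; the only subtle point worth mentioning is that one must select the mirror branch $-\overline{sl}(\overline{L})$ rather than $-\overline{sl}(L)$ in the max of Theorem~\ref{thm:self-linking2}. This is the correct choice because, for non-trivial $L$, the assumption forces $-\overline{sl}(L) = \chi(L) \leq 0$, while $-\overline{sl}(\overline{L}) = 2b(L) - \chi(L)$ is strictly positive, so the maximum is indeed attained by the mirror term and no information is lost by the reduction. Once this is noted the argument is a one-line substitution.
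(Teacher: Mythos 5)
Your proof is correct and is exactly the derivation the paper intends: the paper states the corollary as an immediate consequence ("Therefore we have the following") of Theorem~\ref{thm:self-linking2} combined with the identity $-\overline{sl}(L)-\overline{sl}(\overline{L})=2b(L)$ from (\ref{eqn:br}) and the sharpness hypothesis $\overline{sl}(L)=-\chi(L)$, precisely your substitution. The remark about which branch of the max is attained is harmless but unnecessary, since $\max\{a,b\}\geq b$ justifies the reduction regardless.
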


\par
It is interesting to compare this bound with other lower bounds of $fpbk(L)$.
First of all, a flat plumbing basket $F$ with $b_{1}(F)>0$ is always compressible with $\chi(F)=-b_{1}(F)+1$ so we have a lower bound
\begin{equation}
\label{eqn:genusbound}
fpbk(L) \geq -\chi(L)+3, 
\end{equation}
which we call the \emph{trivial genus bound}. Hirose-Nakashima \cite[Theorem~1.3]{Hirose-Nakashima} proved
 \begin{equation}
\label{eqn:HN}
\mbox{if } \Delta_{L}(t) \mbox{ is not monic, } fpbk(L) \geq \deg \Delta_{L}(t)+4, 
\end{equation}
where $\Delta_{L}(t)$ denotes the Alexander polynomial of $L$.
We call (\ref{eqn:HN}) \emph{the Hirose-Nakashima bound}.
\par 
Since $\deg \Delta_{L}(t)+4 \leq -\chi(L)+3$ and $b(L)\geq 2$ for any non-trivial link $L$, Corollary~\ref{cor:B} gives a better estimate than the trivial genus bound (\ref{eqn:genusbound}) and the Hirose-Nakashima bound (\ref{eqn:HN}) when the Bennequin inequality (\ref{eqn:Be}) is sharp.
\par 
%

Let $\sigma_{1},\ldots,\sigma_{n-1}$ be the standard (Artin) generator of the braid group $B_n$. An $n$-braid $\beta$ is \emph{positive} if $\beta$ is a product of $\sigma_{1},\ldots,\sigma_{n-1}$. 
Similarly an $n$-braid $\beta$ is \emph{strongly quasipositive} if $\beta$ is a product of so-called the \emph{band generators}
\[
a_{i,j} = (\sigma_{j-1} \cdots \sigma_{i+1} \sigma_{i})^{-1}\sigma_{j}(\sigma_{j-1} \cdots \sigma_{i+1} \sigma_{i})\quad (1\leq i<j \leq n). 
\]

A link $L$ is a \emph{strongly quasipositive} if it is represented as the closure of a strongly quasipositive braid. 
A band generator can be viewed as a positively twisted band connecting the $i$-th and the $j$-th strands of the braid. Consequently, when $L$ is strongly quasipositive we have a Seifert surface $F$ of $L$ with $\overline{sl}(L)=-\chi(F)$ so Bennequin's inequality (\ref{eqn:Be}) is sharp.
\par 
Hence Theorem~\ref{thm:self-linking} will be effective for strongly quasipositive links. In Tables~\ref{table-1}--\ref{table-last}, for prime knots $K$ with less than or equal to $9$ crossings we will see that $-\overline{sl}(\overline{K})-1=fpbk(K)$ if $K$ is strongly quasipositive.
This leads to the following question:
\par 
\begin{question}
\label{ques:sqp}
If a link $L$ is strongly quasipositive, 
is the inequality in Theorem~\ref{thm:self-linking} (equivalently, the inequality in Corollary~\ref{cor:B}) an equality? 
Namely, for such $L$, is it true that 
\[
fpbk(L)=-\overline{sl}(\overline{L})-1=-\chi(L)+2b(L)-1?
\]
\end{question}
\par 
As a partial answer, we show the following.
\par 
\begin{thm}\label{theorem:positive}
If $L$ is presented by the closure of a positive $b(L)$-braid, then
\[
fpbk(L)=-\overline{sl}(\overline{L})-1=-\chi(L)-1+2b(L). 
\] 
\end{thm}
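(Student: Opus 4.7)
I would prove the two inequalities of the equality separately: the lower bound by invoking Corollary~\ref{cor:B}, and the upper bound by an explicit construction of a flat plumbing basket.

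\emph{Lower bound.} Write $\beta$ for the positive $b(L)$-braid whose closure is $L$, and let $c$ be its number of crossings. Seifert's algorithm applied to $\beta$ gives a Seifert surface of Euler characteristic $b(L)-c$; as $L$ is fibered by this surface (a classical fact for closures of positive braids), one has $-\chi(L)=c-b(L)$. Viewing $\widehat\beta$ as a transverse braid closure, its self-linking number is $c-b(L)=-\chi(L)$, so Bennequin's inequality~(\ref{eqn:Be}) is sharp. Corollary~\ref{cor:B} then gives
\[
fpbk(L)\;\geq\;-\chi(L)+2b(L)-1\;=\;c+b(L)-1,
\]
which equals $-\overline{sl}(\overline{L})-1$ by (\ref{eqn:br}).

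\emph{Upper bound.} To match this bound we produce a flat plumbing basket $F$ of $L$ with $b_1(F)=c+b(L)-1$. Place $\widehat\beta$ so that its braid axis coincides with the binding $U$ of the disk open book, and isotope the $b(L)$ strands of $\beta$ to sit as parallel chords of the reference page $D_0$. First plumb on $b(L)-1$ flat bands in pages slightly before $\theta=0$ to realize the closure of the trivial $b(L)$-braid as a flat plumbing basket of the $b(L)$-component unlink. Then, reading the word $\beta=\sigma_{i_1}\cdots\sigma_{i_c}$ from left to right, plumb on one additional flat band $B_k$ for each letter $\sigma_{i_k}$, in a page $D_{\theta_k}$ with $\theta_1<\theta_2<\cdots<\theta_c$, placed so as to ``swap'' the $i_k$-th and $(i_k+1)$-st strand arcs. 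Since the linearly ordered pages induce an over/under structure in the associated front projection (cf.\ Figure~\ref{fig:front}), each such flat-band swap realizes a \emph{positive} half-twist $\sigma_{i_k}$. The boundary of $F$ is therefore $\widehat\beta=L$, and the total band count is $(b(L)-1)+c=c+b(L)-1$.

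\emph{Main obstacle.} The principal technical point is verifying that the crossing bands, inserted in order of page, reproduce the word $\beta$ faithfully, in particular producing positive crossings in the correct cyclic positions rather than conjugates or cancellations against the closing bands; this is a local combinatorial check that I would carry out by induction on $c$, starting from the trivial case $c=0$ (where the $b(L)-1$ closing bands form the standard flat plumbing basket of the $b(L)$-component unlink), and using at each inductive step the flat-band/closed-braid correspondence illustrated in Figure~\ref{fig:fpbktobraid}.
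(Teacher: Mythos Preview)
Your lower bound argument is correct and essentially the same as the paper's.

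The upper bound, however, has a genuine gap. The central claim of your construction—that a single flat band placed in a later page ``realizes a positive half-twist $\sigma_{i_k}$''—is neither proved nor, taken literally, true. A flat band is \emph{untwisted}: in the closed-braid picture of Figure~\ref{fig:fpbktobraid} each band unfolds into a pair of oppositely twisted bands, and in the front projection (proof of Lemma~\ref{lem:thurston-bennequin}) every flat plumbing basket has total writhe zero. So no individual band contributes a net $+1$ crossing; whatever crossings your ``crossing bands'' produce must be cancelled elsewhere, and the one-band-per-$\sigma_i$ bookkeeping you assert does not hold. Your proposed induction on $c$ does not close this, since the inductive step invokes the same unproved assertion. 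Moreover, the ``strand arcs'' your crossing bands are supposed to swap are part of $\partial F$, not of the binding $U$, whereas by definition every band of a flat plumbing basket is attached along two arcs of $U$; so the construction is not even well-specified.

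The paper obtains the upper bound by a different and cleaner route. It passes to the mirror $\overline{L}$, the closure of the negative braid $\overline{\beta}$, and inserts the trivial word $(\sigma_{b(L)-1}\cdots\sigma_1)(\sigma_1^{-1}\cdots\sigma_{b(L)-1}^{-1})$ to exhibit $\overline{L}$ as the closure of
\[
(\sigma_{b(L)-1}\cdots\sigma_1)\cdot\bigl(\text{negative word of length } e(\beta)+b(L)-1\bigr).
\]
It then invokes the Furihata--Hirasawa--Kobayashi algorithm \cite[Theorem~2.4]{FHK}, which converts any closed braid of the form $(\sigma_{n-1}\cdots\sigma_1)\cdot(\text{negative word})$ into a flat plumbing basket with one band per negative letter; since $fpbk(L)=fpbk(\overline{L})$ this finishes. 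The leading positive block $(\sigma_{n-1}\cdots\sigma_1)$ is essential in that algorithm—it is what allows the Seifert disks of the closed braid to be amalgamated into the single page $D_0$—and without it there is no one-band-per-crossing correspondence. Your band count $(b(L)-1)+c$ happens to coincide with $e(\beta)+b(L)-1$, but the bands are not playing the roles you assign them.
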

\begin{proof}
By Corollary~\ref{cor:B} we have $fpbk(L) \geq-\overline{sl}(\overline{L})-1=-\chi(L)-1+2b(L)$ so we check the converse inequality.
\par 
Let $\beta$ be a positive $b(L)$-braid whose closure is $L$ and $\overline{\beta}$ be the mirror image of the braid $\beta$. 
Then $\overline{\beta}$ is a negative $b(L)$-braid whose closure is $\overline{L}$. Therefore, by $(\ref{eqn:minimal})$, we have
$\overline{sl}(\overline{L})=-b(L)+e(\overline{\beta})=-b(L)-e(\beta)$. 
Since $\overline{\beta}$ is a negative braid, it is a product of $e(\beta)$ standard negative generators $\{\sigma_{1}^{-1},\ldots,\sigma_{b(L)-1}^{-1}\}$.
Thus the mirror image $\overline{L}$ is a closure of a braid of the form
\begin{align*}
&(\sigma_{b(L)-1}\cdots \sigma_{1})(\sigma_{1}^{-1}\cdots \sigma_{b(L)-1}^{-1})\overline{\beta} \\
& = (\sigma_{b(L)-1}\cdots \sigma_{1})(\mbox{braid word consisting of } e(\beta)+(b(L)-1) \mbox{ negative generators}). 
\end{align*}
By \cite[Theorem~2.4]{FHK}, from such a closed braid representative we construct a flat plumbing basket presentation $F$ of $L$ with $b_1(F)=e(\beta)+b(L)-1=-\overline{sl}(\overline{L})-1$.
\end{proof}
\par
Although this does not fully answer Question \ref{ques:sqp}, it can be used to determine the flat plumbing basket for the torus links. 
\begin{example}[Torus links]
\label{exam:torus}
For any $p\geq q>1$, let $T_{(p,q)}$ be the $(p,q)$-torus link.
Since $b(T_{p,q})=q$ and $T_{p,q}$ is the closure of $q$-braid $(\sigma_{1}\cdots \sigma_{q-1})^{p}$, by Theorem~\ref{theorem:positive}, 
\[
fpbk(T_{p,q})=-\overline{sl}(T_{p,-q})-1=-\chi(T_{p,q})-1+2q=pq-p+q-1. 
\]
\end{example}
\par 
\begin{example}[Twist knots]
Let $K_{m}$ be the $m$-twist knot (Figure~\ref{fig:twist-knot}). 
By \cite[Theorem~1.2]{ENV} and Theorem~\ref{thm:self-linking}, 
\[ 
2k\leq fpbk(K_{2k}), \ 2k+4\leq fpbk(K_{2k+1}). 
\]
Mikami Hirasawa shows that $fpbk(K_{2k+1})\leq 2k+4$ for $k\geq 0$, and $fpbk(K_{2k})\leq 2k$ for $k\geq 3$ in his forthcoming paper. Hence 
\begin{itemize}
\item $fpbk(K_{2k+1})=2k+4$ for $k\geq 0$, 
\item $fpbk(K_{2k})=2k$ for $k\geq 3$, 
\item $fpbk(K_{2})=4$ and $fpbk(K_{4})=6$. 
\end{itemize}
\begin{figure}[h]
\begin{center}
\includegraphics[scale=0.85]{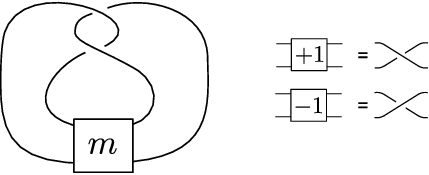}
\end{center}
\caption{The $m$-twist knot $K_{m}$ }
\label{fig:twist-knot}
\end{figure}
\end{example}
\par
The flat plumbing basket number is subadditive under the connected sum of knots but it is not additive in general. 
For example, Hirose-Nakashima \cite[Remark~1.4(b)]{Hirose-Nakashima} proved that $fpbk(3_1)=fpbk(\overline{3_1})=4$ but $fpbk(3_1\sharp \overline{3_1})=6$. 
Nao Kobayashi (Imoto) proved that $fpbk(6_1)=fpbk(\overline{6_1})=6$ but $fpbk(6_1\sharp \overline{6_1})=8$ in \cite[Proposition~5.4]{imoto-thesis}.
\par 
Theorem~\ref{thm:self-linking} gives some sufficient conditions for $fpbk$ to be additive.
\begin{example}\label{ex:additivity}
For knots $K$ and $K'$, $\overline{sl}(K\sharp K') = \overline{sl}(K)+\overline{sl}(K')+1$ holds. 
On the other hand, $fpbk$ is subadditive. 
This shows that if $K_1,\ldots,K_n$ satisfies $fpbk(K_i)=-\overline{sl}(K_{i})-1$ then, by Theorem~\ref{thm:self-linking}, the following holds: 
\[
fpbk(K_1 \sharp K_2 \sharp \cdots \sharp K_n)=fpbk(K_1)+fpbk(K_2)+\cdots + fpbk(K_n). 
\]
For example, $fpbk(3_1\sharp 3_1)=fpbk(3_1)+fpbk(3_1)=8$. 
\end{example}
\section{Flat plumbing basket and arc presentation}\label{sec:arc}
In this section, we give another natural way to relate flat plumbing baskets and Legendrain links.
For basics of arc presentation we refer to \cite{Cromwell2}. See \cite{Ng-Thurston} for a relation to Legendrian and transverse links and arc presentation. 
\par
As the definition already suggests, a flat plumbing basket presentation can be naturally seen as an arc presentation as follows. We view a flat plumbing basket $F$ as a union of the disk $D^{2}$ and $n=b_{1}(F)$ bands $B_1,\ldots,B_n$.
Then the link $L=\partial F$ is decomposed into two types of arcs: arcs that are subarcs of the boundary of bands $\partial B_i$, and arcs that are subarcs of $\partial D^{2}$.
We collapse $\partial D^{2} \cap L$, the latter types of subarcs, to points. Then the resulting link is isotopic to $L$ and consists of arcs coming from $\partial B_i$. We slightly move these arcs so that they lie on different pages.  Then we get an arc presentation $\mathcal{A}_F$ of $L$ with arc number $2n$ (See Figure \ref{fig:fpbktoarc}).

This immediately leads to the following.
\par 
\begin{figure}[htbp]
\begin{center}
\includegraphics*[width=120mm]{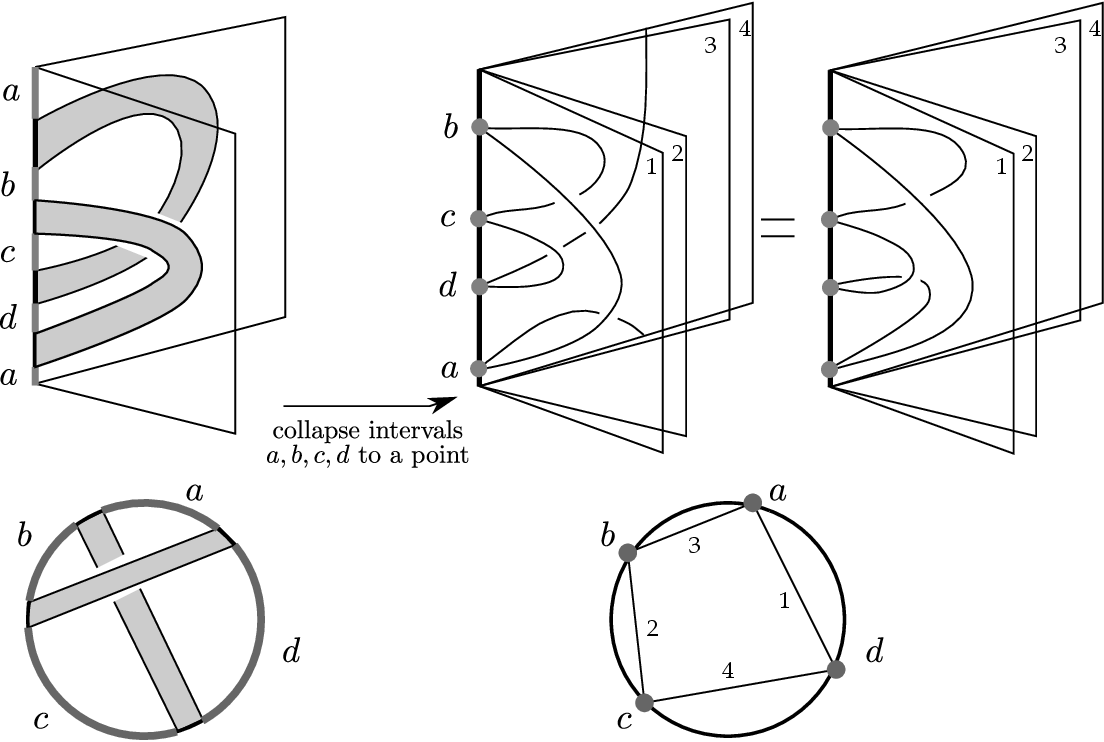}
   \caption{From flat plumbing basket to arc presentation:
 the labels $1,2,3,4$ represents the pages that contains the corresponding arcs}
 \label{fig:fpbktoarc}
\end{center}
\end{figure}
\par
\begin{thm}\label{thm:arc0}
Let $\alpha(L)$ be the arc index of $L$. Then 
\[
2fpbk(L) \geq \alpha(L). 
\]
\end{thm}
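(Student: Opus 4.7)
The plan is to make precise the construction pictured in Figure~\ref{fig:fpbktoarc}: starting from a flat plumbing basket $F$ with $b_1(F)=n$ bands $B_1,\ldots,B_n$, I will produce an arc presentation of $L=\partial F$ using exactly $2n$ arcs, and then minimize over $F$.

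First, I would observe that the boundary $\partial F$ naturally splits into two types of arcs:  (i) the arcs lying on the disk $D_0$, which are the components of $\partial F\cap D_0=\partial(F\setminus\bigcup_i B_i)$, and (ii) the arcs running along the outer edge of each band $B_i$, each of which lies in the page $D_{\theta_i}$. Since each band $B_i$ is glued to $D_0$ along two arcs in $B_i\cap U\subset \partial D_0$, band $B_i$ contributes exactly two such outer arcs in $D_{\theta_i}$; in total there are $2n$ band-arcs and $2n$ disk-arcs on $\partial F$.

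Next, I would isotope $F$ in $\mathbf{S}^3$ as follows. For each band $B_i$, slightly push its two outer boundary arcs into the pages $D_{\theta_i-\varepsilon}$ and $D_{\theta_i+\varepsilon}$ respectively, choosing $\varepsilon>0$ small enough that the $2n$ resulting pages are all distinct (possible since the $\theta_i$ are distinct). For each disk-arc $\alpha\subset \partial F\cap D_0$, collapse $\alpha$ into a single point on the binding $U=\partial D_0$; this can be realized by an ambient isotopy of $L$ that pushes the interior of $\alpha$ radially toward $U$. The resulting link $L'$, topologically isotopic to $L$, now consists of $2n$ arcs, each contained in one of the $2n$ distinct pages mentioned above, with endpoints on $U$; at each of the $2n$ points on $U$ (one for each collapsed disk-arc) exactly two band-arcs meet. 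This is, by definition, an arc presentation of $L$ with arc number $2n$, hence $\alpha(L)\leq 2n=2b_1(F)$.

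Finally, taking the minimum over all flat plumbing basket presentations $F$ of $L$ yields $\alpha(L)\leq 2\,fpbk(L)$, as desired. The argument is essentially a verification that the informal collapsing procedure of Figure~\ref{fig:fpbktoarc} actually produces a valid open book arc presentation; the only point that requires a little care is the separation of the two outer arcs of each band into distinct pages, which is the mild obstacle but is easily handled by the $\varepsilon$-perturbation above since each band originally sits in its own page $D_{\theta_i}$.
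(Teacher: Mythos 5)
Your proposal is correct and is essentially the paper's own argument made explicit: the paper likewise collapses the components of $\partial F$ lying on the binding to points and reads off the $2n$ band-side arcs as an arc presentation with arc number $2b_1(F)$, then minimizes over $F$. Your added $\varepsilon$-perturbation separating the two outer arcs of each band into distinct pages is exactly the small verification the paper leaves implicit in Figure~\ref{fig:fpbktoarc}.
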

\par 
Since $\alpha(L)=c(L)+2$ for non-split alternating link $L$ \cite{Bae-Park}, where $c(L)$ denotes the crossing number of $L$, we have 
\begin{cor}
Let $L$ be a non-split alternating link. Then
\[
2fpbk(L)\geq c(L)+2. 
\]
\end{cor}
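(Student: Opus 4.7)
The corollary is essentially a one-line consequence of Theorem~\ref{thm:arc0} combined with a known equality between the arc index and the crossing number for non-split alternating links.

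My plan is the following. First, I would invoke Theorem~\ref{thm:arc0}, which tells us that $2fpbk(L) \geq \alpha(L)$ for every link $L$, with no alternation hypothesis required. This inequality has already been established by converting a flat plumbing basket presentation into an arc presentation whose arc number is exactly $2b_1(F)$, where $F$ realizes $fpbk(L)$.

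Second, I would quote the theorem of Bae and Park, which resolved a conjecture of Cromwell, stating that every non-split alternating link $L$ satisfies the equality $\alpha(L) = c(L) + 2$, where $c(L)$ denotes the crossing number. Substituting this equality into the inequality of the previous step gives $2fpbk(L) \geq c(L) + 2$, which is precisely the claimed bound.

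The only possible subtlety is making sure the hypothesis of Bae-Park matches our setting: their result applies to non-split alternating links, which is exactly the hypothesis in the statement of the corollary, so no adjustment is needed. There is no real obstacle here; the corollary is a direct specialization of Theorem~\ref{thm:arc0} using a classical external input, and no additional argument beyond citing Bae-Park is required.
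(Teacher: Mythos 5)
Your proposal is correct and coincides with the paper's argument: the corollary is obtained by combining Theorem~\ref{thm:arc0} with the equality $\alpha(L)=c(L)+2$ for non-split alternating links (the Bae--Park resolution of Cromwell's conjecture), exactly as you describe.
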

\par 
It seems likely that this inequality holds for general non-split links.
\par 

Unfortunately, this lower bound is not so good. To extract more detailed information, let us consider the grid diagram representative $G_{F}$ of the arc presentation $\mathcal{A}_{F}$, which is obtained as follows. 

From a construction of the arc presentation $\mathcal{A}_F$, a part of $L=\partial F$ that comes from a boundary of a band $B_i$ appears as oppositely oriented vertical line segments in $G_{F}$ (Figure~\ref{fig:fpbkgrid}(i)). 
By connecting the endpoints of these vertical lines by horizontal lines that correspond to the segment of binding connecting corresponding corners of bands, we get a grid diagram on the annulus (binding $U$)$\times [0,1]$. 
To get a usual grid diagram on rectangle, finally we flip one vertical line so that it is contained in the rectangle (Figure~\ref{fig:fpbkgrid}(ii), and see also the middle and the rightmost picture of Figure~\ref{fig:fpbktoarc}).

\begin{figure}[htbp]
\begin{center}
\includegraphics*[width=103mm]{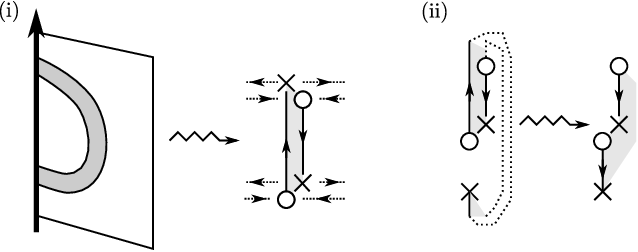}
\end{center}
\caption{From flat plumbing basket to grid diagram}
\label{fig:fpbkgrid}
\end{figure}

\begin{example}
\label{exam:fpbtogrid}

Figure \ref{fig:example-fpbkgrid} gives an example illustration of how to obtain a grid diagram from a flat plumbing basket. (i) depicts a flat plumbing basket, viewed in the open book. (ii) gives a resulting grid diagram on annulus. Each plumbed annulus gives rise to a pair of oppositely oriented vertical segments. Each segment of the binding connecting the corners of the bands appears as horizontal segment. For example, the horizontal segment $(*)$ in (ii) corresponds to a segment $(*)$ connecting the corner of the band $a$ and $d$ in (i). 
Finally, (iii) gives the grid diagram of rectangle, obtained by flipping. 

\begin{figure}[htbp]
\begin{center}
\includegraphics*[width=103mm]{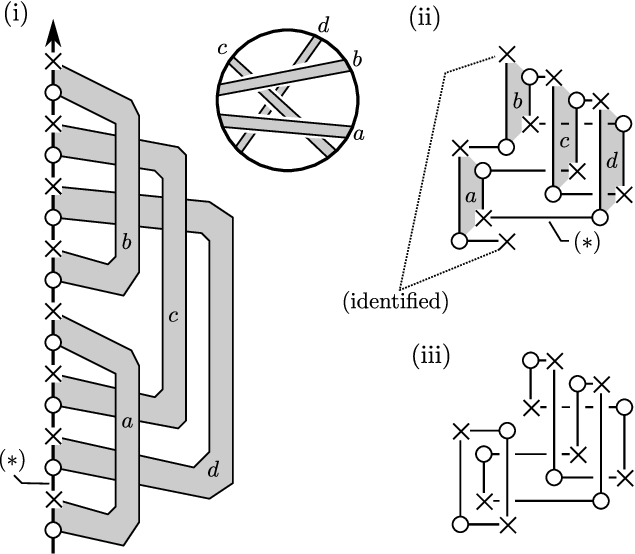}
\end{center}
\caption{From flat plumbing basket to grid diagram}
\label{fig:example-fpbkgrid}
\end{figure}
\end{example}

\par
Let $\mathcal{L}_{G}$  and $\mathcal{T}_{G}$
 be the Legendrian and the transverse 
links represented by the Grid diagram $G_{F}$. 
The Thurston-Bennequin number and the rotation number of $\mathcal{L}_{G}$ are given by
\[ 
tb(\mathcal{L}_{G})= \textrm{writhe} - \frac{1}{2}\# (\includegraphics*[width=3mm]{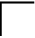},\includegraphics*[width=3mm]{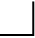}), \quad 
rot(\mathcal{L}_{G})=\frac{1}{2}\left( \# (\includegraphics*[width=3mm]{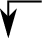},\includegraphics*[width=3mm]{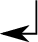}) -\# (\includegraphics*[width=3mm]{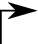},\includegraphics*[width=3mm]{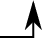}) \right)
\]
where $\# (\includegraphics*[width=3mm]{ne.eps},\includegraphics*[width=3mm]{sw.eps}),\# (\includegraphics*[width=3mm]{dcusp1.eps},\includegraphics*[width=3mm]{dcusp2.eps}) $ and $\# (\includegraphics*[width=3mm]{ucusp1.eps},\includegraphics*[width=3mm]{ucusp2.eps}) $, denote the numbers of corners of the indicated shapes in the grid diagram $G_{F}$.
\par
For each band $B_i$, there are $2^{4}=16$ possibilities of the orientations of horizontal lines from the endpoints of the pair of vertical segments. 
We classify the bands of $F$ into the following five types $A,B,C,D$ and $E$, according to its contribution to the Thurston-Bennequin number and the rotation number, as we show in Table~\ref{table:localpicture}. 
In Example \ref{exam:fpbtogrid}, the bands $a,b,d$ are of type $C$ and the band $c$ is of type $A$.
\par
%
\begin{table}[h]
\begin{tabular}{|c|c|c|c|c|c|c|c|}
\hline
Type&local picture&$tb$&$rot$ \\ \hline\hline
&&& \\
A&\mbox{\raisebox{-4mm}{\includegraphics[scale=0.15]{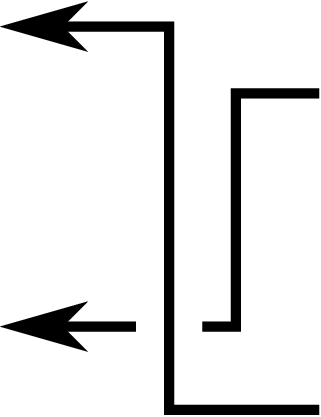}}}&$0$&$1$ \\
&&& \\ \hline
&&& \\
B&
\mbox{\raisebox{-4mm}{\includegraphics[scale=0.15]{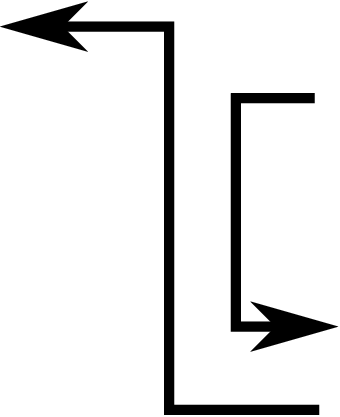}}}\ ,  \ 
\mbox{\raisebox{-4mm}{\includegraphics[scale=0.15]{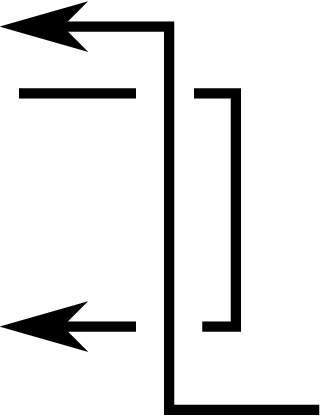}}}\ ,  \ 
\mbox{\raisebox{-4mm}{\includegraphics[scale=0.15]{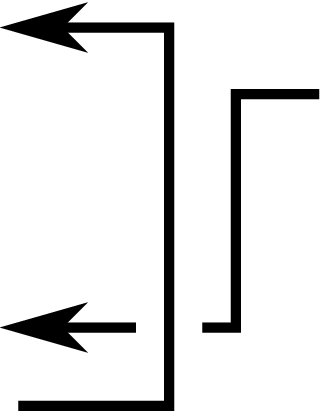}}}\ ,  \ 
\mbox{\raisebox{-4mm}{\includegraphics[scale=0.15]{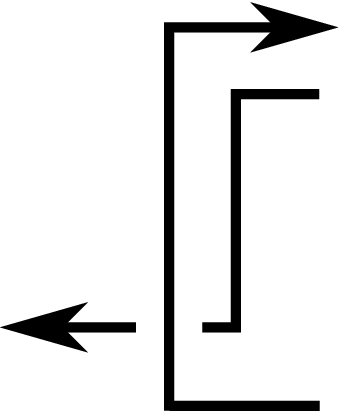}}}&$-\displaystyle{\frac{1}{2}}$&$\displaystyle{\frac{1}{2}}$ \\ 
&&& \\ \hline
&&& \\
C&
\mbox{\raisebox{-4mm}{\includegraphics[scale=0.15]{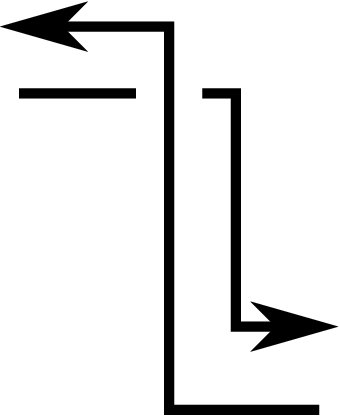}}}\ , \ 
\mbox{\raisebox{-4mm}{\includegraphics[scale=0.15]{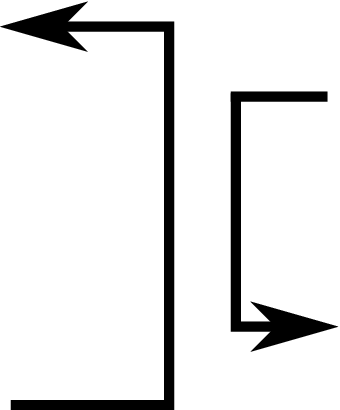}}}\ , \ 
\mbox{\raisebox{-4mm}{\includegraphics[scale=0.15]{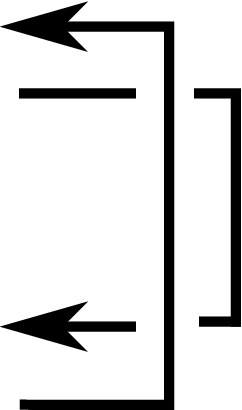}}}\ , \ 
\mbox{\raisebox{-4mm}{\includegraphics[scale=0.15]{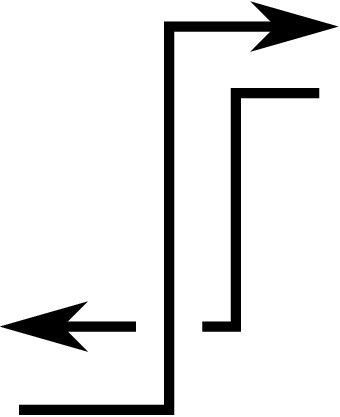}}}\ , \ 
\mbox{\raisebox{-4mm}{\includegraphics[scale=0.15]{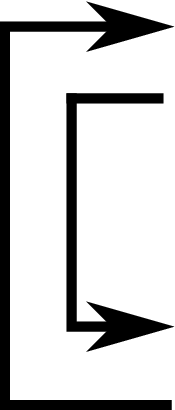}}}\ , \ 
\mbox{\raisebox{-4mm}{\includegraphics[scale=0.15]{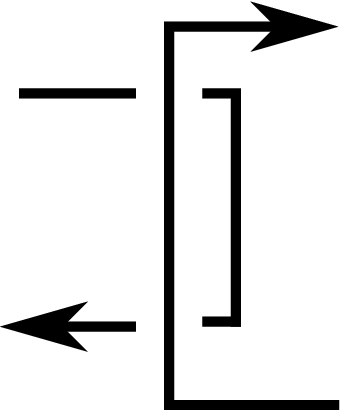}}}
&$-1$&$0$ \\ 
&&& \\ \hline
&&& \\
D&
\mbox{\raisebox{-4mm}{\includegraphics[scale=0.15]{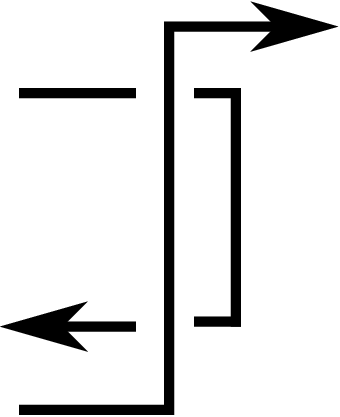}}}\ ,  \ 
\mbox{\raisebox{-4mm}{\includegraphics[scale=0.15]{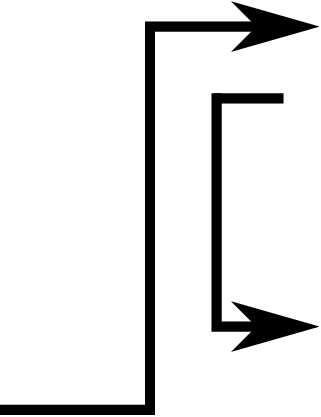}}}\ ,  \ 
\mbox{\raisebox{-4mm}{\includegraphics[scale=0.15]{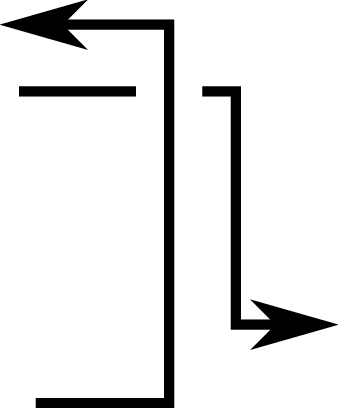}}}\ ,  \ 
\mbox{\raisebox{-4mm}{\includegraphics[scale=0.15]{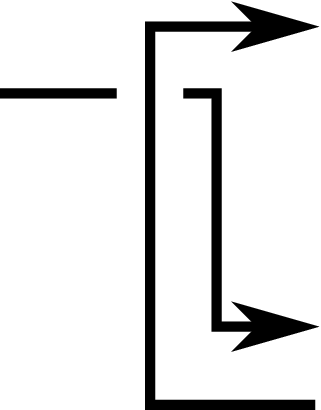}}} &$-\displaystyle{\frac{3}{2}}$&$-\displaystyle{\frac{1}{2}}$ \\
&&& \\ \hline
&&& \\
E&\mbox{\raisebox{-4mm}{\includegraphics[scale=0.15]{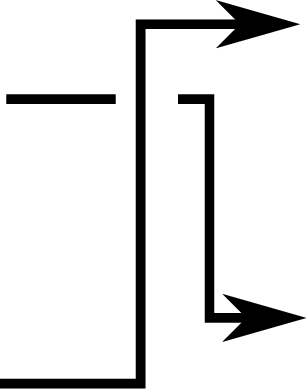}}}&$-2$&$-1$ \\ 
&&& \\ \hline
\end{tabular}
\caption{Fives types of local picture of grid diagram near the bands} 
\label{table:localpicture}
\end{table}
\par

Let $A,B,C,D,E$ be the number of bands of type $A,B,C,D,E$ respectively. Then $A+B+C+D+E=b_{1}(F)$ holds.
Since vertical segments of the grid diagram $G_{F}$ appear in pairs with opposite orientation, the other crossings does not affect the writhe. Therefore we have 
\begin{align}
\label{eqn:tb}&tb(\mathcal{L}_{G})= -\frac{1}{2}B -C-\frac{3}{2}D-2E, \\
\label{eqn:rot}&rot(\mathcal{L}_{G})=A+\frac{1}{2}B -\frac{1}{2}D - E +1, \\
\label{eqn:sl}&sl (\mathcal{T}_{G})=tb(\mathcal{L}_{G})-rot(\mathcal{L}_{G})=-(A+B+C+D+E)-1=-b_{1}(F)-1. 
\end{align}
Here the last $+1$ in the rotation number comes from a final modification (Figure~\ref{fig:fpbkgrid}(ii)) to get a grid diagram. 
We remark that we can also prove Theorem~\ref{thm:self-linking} by the equation~$(\ref{eqn:sl})$. 
\par
Let $H$ be the number of horizontal lines in the grid diagram $G_{D}$ which are oriented from right to left. Then we get a closed $H$-braid representing $\mathcal{T}_{G}$ by flipping the horizontal lines oriented from right to left (see Figure~\ref{fig:gridtobraid}). By definition of type $A,B,C,D,E$, we have 
\begin{equation}
H=\frac{1}{2}(4A+3B+2C+D)= b_{1}(F)+rot(\mathcal{L}_{G})-1. 
\end{equation}
\begin{figure}[htbp]
\begin{center}
\includegraphics[width=75mm]{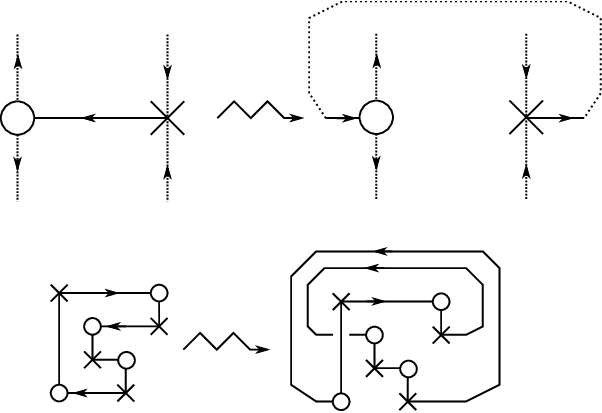}
   \caption{From grid diagram to closed braid diagram}
 \label{fig:gridtobraid}
\end{center}
\end{figure}
Thus we have $tb(\mathcal{L}_{G})=H-2b_{1}(F)$. 
Let $\overline{tb}(L)$ be the maximum Thurston-Bennequin number.
Since $H \geq b(L)$ where $b(L)$ denotes the braid index of $L$, we get the following estimate: 
\begin{thm}[Theorem~\ref{thm:arc-00}]\label{thm:arc1}
For any non-trivial oriented link $L$, we obtain  
\[ 
2fpbk(L) \geq \max\{-\overline{tb}(L),-\overline{tb}(\overline{L})\}+b(L). 
\]
\end{thm}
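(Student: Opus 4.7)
The plan is to apply the formulas derived just before the statement, taking the minimum over all flat plumbing basket presentations of $L$ and exploiting that $fpbk(L)=fpbk(\overline{L})$.

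First I would fix an arbitrary flat plumbing basket presentation $F$ of $L$ and form the associated grid diagram $G_F$, yielding the Legendrian link $\mathcal{L}_G$ and transverse link $\mathcal{T}_G$. From the preceding discussion, I have the two key identities
\[
tb(\mathcal{L}_G) = H - 2b_1(F), \qquad \mathcal{T}_G \text{ is represented by a closed } H\text{-braid},
\]
where $H$ counts the right-to-left horizontal edges of $G_F$. Since $\mathcal{T}_G$ is topologically isotopic to $L$, the definition of braid index gives $H \geq b(L)$, so
\[
\overline{tb}(L) \;\geq\; tb(\mathcal{L}_G) \;=\; H - 2b_1(F) \;\geq\; b(L) - 2b_1(F).
\]
Rearranging yields $2b_1(F) \geq -\overline{tb}(L) + b(L)$, and taking the minimum over all flat plumbing basket presentations $F$ of $L$ produces $2\,fpbk(L) \geq -\overline{tb}(L) + b(L)$.

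To upgrade to the maximum in the statement, I would apply the same argument to the mirror image $\overline{L}$: any flat plumbing basket presentation of $\overline{L}$ gives $2\,fpbk(\overline{L}) \geq -\overline{tb}(\overline{L}) + b(\overline{L})$. Since $fpbk(L)=fpbk(\overline{L})$ and $b(L)=b(\overline{L})$, combining the two bounds yields the desired inequality
\[
2\,fpbk(L) \;\geq\; \max\{-\overline{tb}(L),\,-\overline{tb}(\overline{L})\} + b(L).
\]

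There is no real obstacle beyond bookkeeping: the computations $tb(\mathcal{L}_G)=H-2b_1(F)$ and the construction of the $H$-braid are already established in the preceding paragraphs, so the argument is just an extraction of the consequences. The only point that deserves care is observing that the grid-diagram construction really produces a transverse representative of the topological link type $L$, so that $\overline{tb}(L)$ genuinely dominates $tb(\mathcal{L}_G)$ and $b(L)$ genuinely bounds $H$ from below.
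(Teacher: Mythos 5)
Your argument is correct and is essentially the paper's own: the inequality follows directly from the identities $tb(\mathcal{L}_G)=H-2b_1(F)$ and $H\geq b(L)$ established in the preceding discussion, together with minimizing over $F$ and applying the same bound to the mirror image using $fpbk(L)=fpbk(\overline{L})$ and $b(L)=b(\overline{L})$. The only detail you add explicitly beyond what the paper writes is the observation that $\mathcal{L}_G$ is topologically isotopic to $L$ so that $\overline{tb}(L)\geq tb(\mathcal{L}_G)$, which is exactly the right point to make.
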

\par 
It turns out Theorem~\ref{thm:arc1} is quite useful, although it rarely attains the equality compared with the lower bound in Theorem~\ref{thm:self-linking}. As we will see in Section~\ref{sec:tabulation}, Theorem~\ref{thm:arc1} determines the flat plumbing basket number for many knots. 
A key trick is that for a knot $K$, $fpbk(K) \in 2\mathbb{Z}$ hence a weaker inequality like $2fpbk(K)\geq 9$ actually implies a stronger one $fpbk(K)\geq 6$. 
It is interesting to find a condition for the inequality in Theorem~\ref{thm:arc1} to be an equality, or, to find a refinement of Theorem~\ref{thm:arc1} that yields the equality more often.
\section{Tabulation}\label{sec:tabulation}
By using Theorem~\ref{thm:self-linking} and Theorem \ref{thm:arc-00} we improve the table of $fpbk(K)$ for prime knots up to $9$ crossings given in \cite[Table~1]{Hirose-Nakashima} as Tables~\ref{table-1}--\ref{table-last}.
\par
In the table, 
\begin{itemize}
\item The asterisks $^{\ast}$ are improved points.
\item The double asterisks $^{\ast\ast}$ are given by \cite{CCK} and Mikami Hirasawa. He taught the second author that $8_1$ and $9_{44}$ have the flat plumbing basket number $6$.
\item $g(K)$ is the genus. 
\item The check mark $\checkmark$ in the column ``Not monic?'' means that its Alexander polynomial is not monic. Note that for knots with less than or equal to nine crossings $\deg \Delta_{K}(t)=2g(K)$ holds hence it implies that the Hirose-Nakashima bound (\ref{eqn:HN}) gives a better bound $fpbk(K)\geq 2g(K)+4$ than the trivial genus bound (\ref{eqn:genusbound}). 
\item $\alpha(K)$ is the arc index.  
\item $b(K)$ is the braid index. 
\item $TB(K)$ denotes $\max\{-\overline{tb}(K), -\overline{tb}(\overline{K})\}$. 
\item $SL(K)$ denotes $\max\{-\overline{sl}(K), -\overline{sl}(\overline{K})\}$. 
\item The daggers $^{\dagger}$ in the column $g(K)$,``Not monic?'', $\alpha(K)$, $b(K)$ and $SL(K)$ represent that the trivial genus bound (\ref{eqn:genusbound}),  Hirose-Nakashima bound (\ref{eqn:HN}),  the arc index bound (Theorem~\ref{thm:arc0}), the TB$+$braid index bound in Theorem~\ref{thm:arc-00}, and the self-linking number bound in Theorem~\ref{thm:self-linking} determine the $fpbk$, respectively. 
\end{itemize}
\par 
We refer to \cite[Proposition~1.6]{Ng2} for $\overline{sl}(K)$ and \cite{knot_info} for the genus $g(K)$, arc index $\alpha(K)$, $b(K)$, $\overline{tb}(K)$ and $\Delta_{K}(t)$. 
\par 
Here we list some of simple observations.
\par 
\begin{itemize}
\item The lower bound $2fpbk(K)\geq TB(K)+b(K)$ in Theorem~\ref{thm:arc-00} quite often determines the $fpbk(K)$. 
\item The knots $8_{13}$ and $9_{49}$ show that the lower bound $fpbk(K)\geq SL(K)-1$ in Theorem~\ref{thm:self-linking} and the lower bound $2fpbk(K)\geq TB(K)+b(K)$ in Theorem~\ref{thm:arc-00} are independent. 
\item In general, the arc index bound Theorem~\ref{thm:arc0} rarely detects the flat plumbing basket number. 
\item As the knot $6_3$ (resp. $8_4$) demonstrates, sometimes the trivial genus bound (resp. Hirose-Nakashima bound) determines $fpbk(K)$ whereas other lower bounds Theorem~\ref{thm:arc0}, Theorem~\ref{thm:self-linking} and Theorem~\ref{thm:arc-00} cannot.
\end{itemize}
%
%
\begin{table}[hp]
\begin{tabular}{|c||c|c|c|c|c|c|c|}
\hline
$K$		&$g(K)$				& Not monic?						&$\alpha(K)$		& $b(K)$				& $TB(K)$	& $SL(K)$ 			&$fpbk(K)$ \\ \hline\hline
$3_1$		&	1$^{\dagger}$	&										&	5$^{\dagger}$ 	& 2$^{\dagger}$ 	& 6 			&5$^{\dagger}$		&	4			\\ \hline
$4_1$		&	1$^{\dagger}$	&										&	6$^{\dagger}$	& 3$^{\dagger}$ 	& 3 			&	3					&	4			\\ \hline
$5_1$		&	2$^{\dagger}$	&										&	7 					& 2$^{\dagger}$ 	& 10			&	7$^{\dagger}$	&	6			\\ \hline
$5_2$		&	1					&	\checkmark $^{\dagger}$	&	7 					& 3$^{\dagger}$ 	&	8			&7$^{\dagger}$		&	6			\\ \hline
$6_1$		&	1					&  \checkmark $^{\dagger}$	&	8					& 4$^{\dagger}$ 	&	5			&5						&	6			\\ \hline
$6_2$		&	2$^{\dagger}$	&										&	8 					& 3$^{\dagger}$	&	7			&5						&	6			\\ \hline
$6_3$		&	2$^{\dagger}$	&										&	8 					& 3					&	4			&3						&	6			\\ \hline
$7_1$		&	3$^{\dagger}$	&										&	9 					& 2$^{\dagger}$ 	&	14			&9$^{\dagger}$		&  8			\\ \hline
$7_2$		&	1					&\checkmark  						&	9 					& 4$^{\dagger}$ 	&	10			&9$^{\dagger}$		&	8$^{\ast}$			\\ \hline
$7_3$		&	2					&\checkmark $^{\dagger}$		&	9 					& 3$^{\dagger}$	&	12			&9$^{\dagger}$		&	8			\\ \hline
$7_4$		&	1					&\checkmark 						&	9 					& 4$^{\dagger}$ 	&	10			&9$^{\dagger}$		&	8$^{\ast}$			\\ \hline
$7_5$		&	2					&\checkmark $^{\dagger}$		&	9 					& 3$^{\dagger}$ 	&  12			&9$^{\dagger}$		&	8			\\ \hline
$7_6$		&	2$^{\dagger}$	&										&	9$^{\dagger}$	& 4$^{\dagger}$ 	&	8			&7$^{\dagger}$		&	6			\\ \hline
$7_7$		&	2$^{\dagger}$	&										&	9$^{\dagger}$	& 4$^{\dagger}$	&  5			&5						&	6			\\ \hline
$8_1$		&	1					&\checkmark $^{\dagger}$		&	10$^{\dagger}$	& 5$^{\dagger}$ 	&	7			&7$^{\dagger}$		&	6$^{\ast\ast}$			\\ \hline
$8_2$		&	3$^{\dagger}$	&										&	10					& 3$^{\dagger}$	&	11			&7						&	8			\\ \hline
$8_3$		&	1					&\checkmark $^{\dagger}$		&	10$^{\dagger}$	& 5$^{\dagger}$	&	5			&5						&	6			\\ \hline
$8_4$		&	2					&\checkmark $^{\dagger}$		&	10					& 4					&	7			&5						&	8			\\ \hline
$8_5$		&	3$^{\dagger}$ 	&										&  10					& 3$^{\dagger}$	&	11			&7						&	8			\\ \hline
$8_6$		&	2 					&\checkmark $^{\dagger}$		&	10					& 4$^{\dagger}$ 	&	9			&7						&	8			\\ \hline
$8_7$		&	3$^{\dagger}$	&										&	10					& 3					&  8			&5						&	8			\\ \hline
$8_8$		&	2 					&	\checkmark $^{\dagger}$	&	10					& 4 					&	6			&5						&	8			\\ \hline
$8_9$		&	3$^{\dagger}$	& 										&	10					& 3 					&	5			&3						&	8			\\ \hline
$8_{10}$	&	3$^{\dagger}$	&										&	10					& 3 					&	8			&5						&	8			\\ \hline
$8_{11}$	&	2					&\checkmark $^{\dagger}$		&	10					& 4$^{\dagger}$	&	9			&7						&	8			\\ \hline
$8_{12}$	&	2$^{\dagger}$	&										&	10$^{\dagger}$	& 5$^{\dagger}$ 	&	5			&5						&	6			\\ \hline
$8_{13}$	&	2					&\checkmark $^{\dagger}$		&	10					& 4					&	6			&5						&	8			\\ \hline
$8_{14}$	&	2					&\checkmark $^{\dagger}$		&	10					& 4$^{\dagger}$	&	9			&7						&	8			\\ \hline
$8_{15}$	&	2					&\checkmark 						&	10					& 4$^{\dagger}$	&	13			&11$^{\dagger}$	&	10$^{\ast}$			\\ \hline
$8_{16}$	&	3$^{\dagger}$	&										&	10					& 3					&	8			&5						&	8			\\ \hline
$8_{17}$	&	3$^{\dagger}$	&										&	10					& 3					&	5			&3						&	8			\\ \hline
$8_{18}$	&	3$^{\dagger}$	&										&	10					& 3					&	5			&3						&	8			\\ \hline
$8_{19}$	&	3					&										&	7 					& 3 					&	12			&11$^{\dagger}$	&	10$^{\ast}$			\\ \hline
$8_{20}$	&	2$^{\dagger}$	&										&	8 					& 3$^{\dagger}$	&	6			&5						&	6			\\ \hline
$8_{21}$	&	2$^{\dagger}$	&										&	8 					& 3$^{\dagger}$	&	9			&7$^{\dagger}$		&	6			\\ \hline
$9_1$		&	4$^{\dagger}$	&										&	11					& 2$^{\dagger}$	&	18			&11$^{\dagger}$	&	10			\\ \hline
$9_2$		&	1					&\checkmark 						&		11				& 5$^{\dagger}$ 	&	12			&11$^{\dagger}$	&	10$^{\ast}$			\\ \hline
$9_3$		&	3					&\checkmark $^{\dagger}$		&	11					& 3$^{\dagger}$ 	&	16			&11$^{\dagger}$	&	10			\\ \hline
$9_4$		&	2					&\checkmark 						&	11					& 4$^{\dagger}$ 	&	14			&11$^{\dagger}$	&	10$^{\ast}$			\\ \hline
$9_5$		&	1					&\checkmark 						&	11					& 5$^{\dagger}$ 	&	12			&11$^{\dagger}$	&	10$^{\ast}$			\\ \hline

\end{tabular}
\caption{
Table of flat plumbing basket numbers $fpbk(K)$ for prime knots $K$ with up to $9$ crossings.  
For the notations, see Section~\ref{sec:tabulation}. } 
\label{table-1}
\end{table}
\begin{table}[hp]
\begin{tabular}{|c||c|c|c|c|c|c|c|}
\hline
$K$			&$g(K)$					& Not monic?						&$\alpha(K)$			& $b(K)$				& $TB(K)$	& $SL(K)$ 				&$fpbk(K)$ \\ \hline\hline

$9_6$		&	3					&\checkmark $^{\dagger}$		&	11					& 3$^{\dagger}$ 	&	16			&11$^{\dagger}$	&	10			\\ \hline
$9_7$		&	2					&\checkmark 						&	11					& 4$^{\dagger}$ 	&	14			&11$^{\dagger}$	&	10$^{\ast}$			\\ \hline
$9_8$			&	2						&	\checkmark $^{\dagger}$	& 11 						& 5$^{\dagger}$	&8				&	7						&	8			\\ \hline
$9_9$			&	3						&	\checkmark $^{\dagger}$	& 11						& 3$^{\dagger}$	&16			&	11$^{\dagger}$		&	10			\\ \hline
$9_{10}$		&	2						&  \checkmark 					& 11 						& 4$^{\dagger}$	& 14			&	11$^{\dagger}$		&	10$^{\ast}$		\\ \hline
$9_{11}$		&	3$^{\dagger}$		&										&	11						& 4$^{\dagger}$	& 12			& 9$^{\dagger}$		&	8			\\ \hline
$9_{12}$		&	2						& \checkmark $^{\dagger}$		& 11						& 5$^{\dagger}$	& 10			&	9$^{\dagger}$		&	8			\\ \hline
$9_{13}$		&	2						&	\checkmark	 					& 11 						& 4$^{\dagger}$	& 14			&11$^{\dagger}$		&	10$^{\ast}$		\\ \hline
$9_{14}$		&	2						&	\checkmark $^{\dagger}$	& 11 						& 5					&	7			&	7						&8			\\ \hline
$9_{15}$		&	2  					& \checkmark $^{\dagger}$		& 11 						& 5$^{\dagger}$	& 10			&	9$^{\dagger}$		&	8			\\ \hline
$9_{16}$		&	3						&	\checkmark $^{\dagger}$	& 11 						& 3$^{\dagger}$	& 16			&	11$^{\dagger}$		&	10			\\ \hline
$9_{17}$		&	3$^{\dagger}$		&										& 11 						& 4					&	8			& 5						&	8			\\ \hline
$9_{18}$		&	2						&	\checkmark 					& 11 						& 4$^{\dagger}$	&	14			&	11$^{\dagger}$		&	10$^{\ast}$		\\ \hline
$9_{19}$		&	2						&	\checkmark $^{\dagger}$	&	11						& 5 					&6				&	5						&	8			\\ \hline
$9_{20}$		&	3$^{\dagger}$		&										&	11						& 4$^{\dagger}$	&12			&	9$^{\dagger}$		&	8			\\ \hline
$9_{21}$		&	2						&	\checkmark $^{\dagger}$	&  11 					&5$^{\dagger}$	 	&10 			&9$^{\dagger}$			&	8			\\ \hline
$9_{22}$		&	3$^{\dagger}$		&										&	11 					& 4 					&8   			& 5						&	8			\\ \hline
$9_{23}$		&	2						&	\checkmark 					&  11 					& 4$^{\dagger}$	& 14			&	11$^{\dagger}$		&	10$^{\ast}$			\\ \hline
$9_{24}$		&	3$^{\dagger}$		&										&	11						& 4 					& 6  			& 5						&	8			\\ \hline
$9_{25}$		&	2						&	\checkmark 					&  11						& 5	 				& 10 			&	9						&8--10				\\ \hline
$9_{26}$		&	3$^{\dagger}$		&										&	11						& 4$^{\dagger}$	& 9 			& 7						&	8			\\ \hline
$9_{27}$		&	3$^{\dagger}$		&										&	11						& 4 					& 6 			& 5						&	8			\\ \hline
$9_{28}$		&	3$^{\dagger}$		&										&	11						& 4$^{\dagger}$	& 9 			& 7						&	8			\\ \hline
$9_{29}$		&	3$^{\dagger}$		&										&	11						& 4 					& 8 			& 5						&	8			\\ \hline
$9_{30}$		&	3$^{\dagger}$		&										&	11						& 4 					& 6 			&5							&	8			\\ \hline
$9_{31}$		&	3$^{\dagger}$		&										&	11						& 4$^{\dagger}$	& 9 			&7							&	8			\\ \hline
$9_{32}$		&	3$^{\dagger}$		&										&	11						& 4$^{\dagger}$	& 9 			&7							&	8			\\ \hline
$9_{33}$		&	3$^{\dagger}$		&										&	11						& 4 					&6				&5							&	8			\\ \hline
$9_{34}$		&	3						&		         						& 11						& 4 					& 6 			&	5						&8--12				\\ \hline
$9_{35}$		&	1						&	\checkmark 					& 	11						& 5$^{\dagger}$	&	12			&11$^{\dagger}$		&	10$^{\ast}$			\\ \hline
$9_{36}$		&	3$^{\dagger}$		&										&	11						& 4$^{\dagger}$	&	12 		&9$^{\dagger}$			&	8			\\ \hline
$9_{37}$		&	2						& \checkmark $^{\dagger}$		&	11						& 5 					&	6			&	5						&	8			\\ \hline
$9_{38}$		&	2						& \checkmark 						&	11						& 4$^{\dagger}$	&	14			&	11$^{\dagger}$		&	10$^{\ast}$			\\ \hline
$9_{39}$		&	2						& \checkmark 						&	11						& 5 					& 10 			&9							&8--10				\\ \hline
$9_{40}$		&	3						&										&	11						& 4 					& 9			&	7						&8--12				\\ \hline
$9_{41}$		&	2						& \checkmark 						&	11						& 5 					& 7			&7							&8--10				\\ \hline
$9_{42}$		&	2$^{\dagger}$		&										&  8 						& 4$^{\dagger}$	& 5			&	5						&	6			\\ \hline
$9_{43}$		&	3						&										&	9 						& 4 					& 10			& 9						&8--10				\\ \hline
$9_{44}$		&	2$^{\dagger}$		&										&	9$^{\dagger}$		& 4$^{\dagger}$	& 6			&	5						&6$^{\ast\ast}$		\\ \hline
$9_{45}$		&	2						&										&	9 						& 4$^{\dagger}$	&   10		&	9$^{\dagger}$		&8$^{\ast}$			\\ \hline
$9_{46}$		&	1						&	\checkmark $^{\dagger}$	&	8						& 4$^{\dagger}$	& 7			&	7$^{\dagger}$		&	6			\\ \hline
$9_{47}$		&	3$^{\dagger}$		&										&	9	 					& 4 					&	7			&	7						&	8			\\ \hline
$9_{48}$		&	2$^{\dagger}$		&										&	9$^{\dagger}$ 		& 4$^{\dagger}$	&	8			&	7$^{\dagger}$		&	6			\\ \hline
$9_{49}$		&	2						&	\checkmark 					&	9 						& 4 					& 12			&	11$^{\dagger}$		&	10$^{\ast}$			\\ \hline
\end{tabular}
\caption{
Table of flat plumbing basket numbers $fpbk(K)$ for prime knots $K$ with up to $9$ crossings (continuation of Table~\ref{table-1}). } 
\label{table-last}
\end{table}

\noindent{\bf Acknowledgements: }
The second author would like to thank the members of Knotting Nagoya at Nagoya Institute of Technology in June 24--25, 2017. 
In particular, the second author wishes to express his gratitude to Mikami Hirasawa and Susumu Hirose for many helpful comments, data of flat plumbing baskets and their encouragements.  
The first author was supported by JSPS KAKENHI Grant numbers JP15K17540 and JP16H02145.
The second author was supported by JSPS KAKENHI Grant numbers JP16H07230 and JP18K13416. 


\bibliographystyle{amsplain}
\bibliography{mrabbrev,tagami}

\providecommand{\bysame}{\leavevmode\hbox to3em{\hrulefill}\thinspace}
\providecommand{\MR}{\relax\ifhmode\unskip\space\fi MR }
\providecommand{\MRhref}[2]{%
  \href{http://www.ams.org/mathscinet-getitem?mr=#1}{#2}
}
\providecommand{\href}[2]{#2}
\begin{thebibliography}{10}

\bibitem{Bae-Park}
Y.~Bae and C.-Y. Park, \emph{An upper bound of arc index of links}, Math. Proc.
  Cambridge Philos. Soc. \textbf{129} (2000), no.~3, 491--500. \MR{1780500}

\bibitem{Bennequin}
D.~Bennequin, \emph{Entrelacements et \'equations de {P}faff}, Third
  {S}chnepfenried geometry conference, {V}ol. 1 ({S}chnepfenried, 1982),
  Ast\'erisque, vol. 107, Soc. Math. France, Paris, 1983, pp.~87--161.
  \MR{753131 (86e:58070)}

\bibitem{knot_info}
J.~C. Cha and C.~Livingston, \emph{Knot{I}nfo},
  http://www.indiana.edu/\%7eknotinfo/, October 31, 2017.

\bibitem{CCK}
Y.~H. Choi, Y.~K. Chung, and D.~Kim, \emph{The complete list of prime knots
  whose flat plumbing basket numbers are 6 or less}, J. Knot Theory
  Ramifications \textbf{24} (2015), no.~7, 1550042, 13. \MR{3361806}

\bibitem{Cromwell2}
P.~R. Cromwell, \emph{Embedding knots and links in an open book. {I}. {B}asic
  properties}, Topology Appl. \textbf{64} (1995), no.~1, 37--58. \MR{1339757}

\bibitem{Dynnikov-Prasolov}
I.~A. Dynnikov and M.~V. Prasolov, \emph{Bypasses for rectangular diagrams. {A}
  proof of the {J}ones conjecture and related questions}, Trans. Moscow Math.
  Soc. (2013), 97--144. \MR{3235791}

\bibitem{ENV}
J.~B. Etnyre, L.~L. Ng, and V.~V\'ertesi, \emph{Legendrian and transverse twist
  knots}, J. Eur. Math. Soc. (JEMS) \textbf{15} (2013), no.~3, 969--995.
  \MR{3085098}

\bibitem{F-W}
J.~Franks and R.~F. Williams, \emph{Braids and the {J}ones polynomial}, Trans.
  Amer. Math. Soc. \textbf{303} (1987), no.~1, 97--108. \MR{896009 (88k:57006)}

\bibitem{FHK}
R.~Furihata, M.~Hirasawa, and T.~Kobayashi, \emph{Seifert surfaces in open
  books, and a new coding algorithm for links}, Bull. Lond. Math. Soc.
  \textbf{40} (2008), no.~3, 405--414. \MR{2418796}

\bibitem{Hirose-Nakashima}
S.~Hirose and Y.~Nakashima, \emph{Seifert surfaces in open books, and pass
  moves on links}, J. Knot Theory Ramifications \textbf{23} (2014), no.~4,
  1450021, 12. \MR{3218929}

\bibitem{Kim-flat}
D.~Kim, \emph{An alternating labeling on a spanning tree of {S}eifert graphs
  and applications in knot theory}, arXiv:1108.1455.

\bibitem{imoto-thesis}
N.~Kobayashi~(Imoto), \emph{Basket diagram and its application to flat plumbing
  basket number of links}, 2017, Thesis (Ph.D.)--Nara Women's University.

\bibitem{LaFountain-Menasco}
D.~J. LaFountain and W.~W. Menasco, \emph{Embedded annuli and {J}ones'
  conjecture}, Algebr. Geom. Topol. \textbf{14} (2014), no.~6, 3589--3601.
  \MR{3302972}

\bibitem{Morton}
H.~R. Morton, \emph{Seifert circles and knot polynomials}, Math. Proc.
  Cambridge Philos. Soc. \textbf{99} (1986), no.~1, 107--109. \MR{809504
  (87c:57006)}

\bibitem{Ng2}
L.~Ng, \emph{On arc index and maximal {T}hurston-{B}ennequin number}, J. Knot
  Theory Ramifications \textbf{21} (2012), no.~4, 1250031, 11. \MR{2890458}

\bibitem{Ng-Thurston}
L.~Ng and D.~Thurston, \emph{Grid diagrams, braids, and contact geometry},
  Proceedings of {G}\"okova {G}eometry-{T}opology {C}onference 2008, G\"okova
  Geometry/Topology Conference (GGT), G\"okova, 2009, pp.~120--136.
  \MR{2500576}

\bibitem{book-contact}
B.~Ozbagci and A.~Stipsicz, \emph{Surgery on contact 3-manifolds and {S}tein
  surfaces}, Bolyai Society Mathematical Studies, vol.~13, Springer-Verlag,
  Berlin; J\'anos Bolyai Mathematical Society, Budapest, 2004. \MR{2114165}

\bibitem{tagami4}
K.~Tagami, \emph{Flat plumbing basket, self-linking number and
  {T}hurston-{B}ennequin number}, arXiv:1709.08837.

\end{thebibliography}
\end{document}